\begin{document}

\numberwithin{equation}{section}
\newtheorem{theorem}{Theorem}[section]
\newtheorem{proposition}{Proposition}[section]
\newtheorem{lemma}{Lemma}[section]

\newenvironment{proof}[1][Proof.]{\begin{trivlist}
\item[\hskip \labelsep {\bfseries #1}]}{\end{trivlist}}

\newenvironment{definition}[1][Definition.]{\begin{trivlist}
\item[\hskip \labelsep {\bfseries #1}]}{\end{trivlist}}

\newenvironment{example}[1][Example.]{\begin{trivlist}
\item[\hskip \labelsep {\bfseries #1}]}{\end{trivlist}}

\newenvironment{remark}[1][Remark.]{\begin{trivlist}
\item[\hskip \labelsep {\bfseries #1}]}{\end{trivlist}}

\newcommand{\qed}{\nobreak \ifvmode \relax \else
      \ifdim\lastskip<1.5em \hskip-\lastskip
      \hskip1.5em plus0em minus0.5em \fi \nobreak
      \vrule height0.75em width0.5em depth0.25em\fi}

\newcommand{\be}{\begin{equation}}
\newcommand{\ee}{\end{equation}}
\newcommand{\bal}{\begin{aligned}}
\newcommand{\eal}{\end{aligned}}
\newcommand{\p}{\partial}
\newcommand{\f}{\frac}
\newcommand{\m}{\mathcal}
\newcommand{\wt}{\widetilde}
\newcommand{\wh}{\widehat}
\newcommand{\n}{\nabla}
\newcommand{\mb}{\mathbb}

\title{Nonlinear PDEs and Scale Dependence}

\author{Garry Pantelis}

\date{}

\maketitle

\begin{abstract}
The properties of nonlinear PDEs that generate filtered solutions are
explored with particular attention given to the constraints on the
residual term. The analysis is carried out for nonlinear PDEs with
an emphasis on evolution problems recast on space-time-scale. We
examine the role of approximation that allow for the generation of
solutions on isolated scale slices.
\end{abstract}

\section{Introduction}

We are interested in applications, particularly of the evolution type, that are governed by nonlinear PDEs that generate solutions that exhibit fluctuations at all spatial/temporal scales.
It is often the case that one is interested in obtaining a macroscopic solution that is defined as some kind of filtered representation of the dependent variables associated with some scale of resolution.
Consequently, the macroscopic equations satisfied by the filtered variables will depend not only on space-time but also on a parameter associated with scale.
We start by example and then generalise in the following sections.

Let $x=(x^1, ... , x^n)$ be the cartesian coordinates of $\mb{R}^n$ ($n=1,2,3$), representing the space domain, and denote the time coordinate by $t \in \mb{R}_+$ ($\mb{R}_+ =$ the set of positive real numbers excluding $0$).
In order to define the filtered variables we introduce the scale parameter $\eta \in I$ on the scale interval $I=(0,\eta_0)$ where $0 < \eta_0 << 1$.
We work on space-time-scale $M = \mb{R}^n \times \mb{R}_+ \times I \subset \mb{R}^m$, $m={n+2}$, and introduce the linear operator
\be
\m{L} = \f{\p~}{\p \eta} - \triangle
\label{filterop}
\ee
where $\triangle = \n \cdot \n$ and $\n$ is the spatial gradient operator.
A filtered scalar variable $\phi :M \rightarrow \mb{R}$ can now be defined by
\be
\m{L} \phi = 0 \qquad (x,t,\eta) \in M \,.
\label{filter}
\ee
Suppose that there exists $\Phi(x,t)$ ($\| \Phi \|_{L_2(\mb{R}^n)} < \infty$) such that $\lim_{\eta \rightarrow 0^+} \phi = \Phi(x,t)$.
If we can also set $\lim_{|x| \rightarrow \infty} \n \phi = 0$ then $\phi$ is uniquely defined by
\be
\phi (x,t,\eta) = \int_{\mb{R}^n} G(x-\hat{x},\eta) \Phi (\hat{x},t) \, d \hat{x}^1 \dots d \hat{x}^n
\label{i2}
\ee
where $G(x,\eta) = (4 \pi \eta)^{-n/2} \exp [- |x|^2 / (4 \eta )]$ with the property $\int_{\mb{R}^n} G(x,\eta) \, d x^1 \dots d x^n = 1$.
Hence $\phi(x,t,\eta)$ becomes the Gaussian filter of $\Phi(x,t)$.
In this way we can identify $\eta = \beta \delta^2$, where $\delta$ is a parameter associated with the characteristic spatial resolution and $\beta$ is a constant associated with the filter width.

A major sticking point with this approach is the need to assume the existence of a regular limiting solution, as $\eta \rightarrow 0^+$, associated with the space-time pointwise representation of the dependent variables. 
Here we shall depart from the approach made in \cite{Pan04} and \cite{Pan05} by attempting to work entirely in $M$ and avoid any reference to a limiting fully resolved solution on space-time.

We denote by $\m{R}(M,\mb{R}^N)$, $N \geq 1$, the set of smooth vector functions on $M$, i.e. smooth mappings $M \rightarrow \mb{R}^N$.
For the case $N=1$ members of $\m{R}(M,\mb{R}^N)$ are smooth scalar functions.

Consider now an ideal fluid whose variables are defined on $M$ as follows. 
Let the fluid velocity $v \in \m{R}(M,\mb{R}^n)$ be a filtered field, defined by
\be
\m{L} v = 0 \qquad (x,t,\eta) \in M \,,
\label{ff}
\ee
and satisfy the continuity condition
\be
\n \cdot v = 0 \qquad (x,t,\eta) \in M \,.
\label{df}
\ee
We now construct the macroscopic equations from (\ref{ff}) and (\ref{df}).

On each scale slice $M|_{\eta=\text{const}}$, in a time interval $\m{I} \subset \mb{R}$ a fluid element follows a path under the flow characterized by a mapping $\gamma : \m{I} \rightarrow M|_{\eta=\text{const}}$ (integral curve of $v$) so that
\be
v \circ \gamma (t) = \f{d x \circ \gamma (t)}{dt} \,.
\ee
The acceleration, $a$, of the same fluid element can be defined along the flow path $\gamma(t)$ by
\be
a \circ \gamma(t) = \f{d^2 x \circ \gamma (t)}{dt^2} = \f{d v \circ \gamma (t)}{dt} \,.
\ee
Alternatively we may write $a \in \m{R}(M,\mb{R}^n)$ as
\be
a = \f{\p v}{\p t} + v \cdot \n v \,.
\ee
We introduce the symmetric second order tensor $\sigma$ whose components are given by
\be
\sigma^{ab} = \sum_{c=1}^n \f{\p v^a}{\p x^c} \f{\p v^b}{\p x^c} \, \quad 1 \leq a,b \leq n \,.
\label{sig}
\ee
We also define
\be
s= - 2 \, \n \cdot \sigma \,.
\label{srce}
\ee
The properties assigned to $v$ above dictate that the acceleration must satisfy the following constraint.

\begin{proposition}
If  $v \in \m{R}(M,\mb{R}^n)$ satisfies (\ref{ff}) and (\ref{df}) then
\be
\m{L} a = s \qquad (x,t,\eta) \in M \,,
\label{ffa}
\ee
where $s$ is given by (\ref{srce}).
\end{proposition}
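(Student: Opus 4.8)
The plan is to verify (\ref{ffa}) by a direct component-wise computation carried out entirely on $M$, using the two hypotheses in the convenient form $\p v / \p \eta = \triangle v$ (from (\ref{ff})) and $\n \cdot v = 0$ (from (\ref{df})). Fix a component index $b$ and write $a^b = \p v^b / \p t + \sum_c v^c \, \p v^b / \p x^c$. Since $\eta$, $t$ and the $x^c$ are independent coordinates on $M$ and $v$ is smooth, the operator $\p / \p \eta$ commutes with $\p / \p t$ and with each $\p / \p x^c$; applying it to $a^b$ and then substituting $\p v / \p \eta = \triangle v$ wherever it appears converts $\p a^b / \p \eta$ into an expression involving only spatial and time derivatives of $v$ and of $\triangle v$, namely $\p_t \triangle v^b + \sum_c [(\triangle v^c)\, \p v^b/\p x^c + v^c \, \p (\triangle v^b)/\p x^c]$.

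Next I would compute $\triangle a^b$ using the Leibniz identity $\triangle(fg) = (\triangle f) g + 2 \n f \cdot \n g + f \triangle g$ on the transport term $v^c \, \p v^b / \p x^c$, together with $\triangle (\p v^b / \p t) = \p (\triangle v^b) / \p t$ and $\triangle (\p v^b / \p x^c) = \p (\triangle v^b)/\p x^c$. Subtracting the two expressions, the time term $\p(\triangle v^b)/\p t$, the term $(\triangle v^c)\, \p v^b/\p x^c$, and the transport term $v^c \, \p (\triangle v^b)/\p x^c$ cancel in pairs between $\p a^b / \p \eta$ and $\triangle a^b$, leaving only the cross term from the Leibniz rule:
\be
\m{L} a^b = - 2 \sum_{c,d} \f{\p v^c}{\p x^d} \f{\p^2 v^b}{\p x^d \p x^c} \,.
\ee

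It then remains to identify the right-hand side with $s^b$. Expanding $s^b = -2 (\n \cdot \sigma)^b = -2 \sum_a \p \sigma^{ab}/\p x^a$ with $\sigma^{ab}$ given by (\ref{sig}) and applying the product rule produces two groups of terms; in one of them the factor $\sum_a \p^2 v^a / (\p x^a \p x^c) = \p (\n \cdot v)/\p x^c$ vanishes by (\ref{df}), and the surviving group is exactly $-2 \sum_{c,d} (\p v^c/\p x^d)(\p^2 v^b / \p x^d \p x^c)$ after relabelling summation indices. This matches $\m{L} a^b$ above, which establishes (\ref{ffa}).

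The computation is routine; the only point requiring care is the bookkeeping of the triple sums and the correct placement of the continuity condition $\n \cdot v = 0$, which is precisely what forces the first-order divergence terms in $\n \cdot \sigma$ to drop out. Smoothness of $v$ guarantees that all mixed partials commute, so no analytic subtleties arise and the identity holds pointwise on $M$.
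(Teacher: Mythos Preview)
Your argument is correct and is essentially the same computation as the paper's: both rely on the Leibniz rule for $\triangle$ on the product $v^c\,\p_c v^b$ to isolate the cross term $-2\sum_{c,d}(\p_d v^c)(\p_c\p_d v^b)$, and both use $\n\cdot v=0$ to recast that cross term in the divergence form $-2\,\n\cdot\sigma$. The only cosmetic difference is ordering: the paper keeps $\m{L}v$ symbolic, writes the product identity $\m{L}(v\cdot\n v)=(\m{L}v)\cdot\n v+v\cdot\n(\m{L}v)+s$, and sets $\m{L}v=0$ at the end, whereas you substitute $\p_\eta v=\triangle v$ at the outset and work component-wise.
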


\begin{proof}
We have
\be
\m{L} a = \m{L} (\f{\p v}{\p t} + v \cdot \n v) = \f{\p ~}{\p t} (\m{L} v) + \m{L} (v \cdot \n v) \,.
\label{p111}
\ee
By a straightforward calculation
\be
\m{L} (v \cdot \n v) = (\m{L} v) \cdot \n v + v \cdot \n (\m{L} v) + s
\label{p112}
\ee
where we make use of the fact that $v$ is divergent free to express the source term $s$ in the divergent form (\ref{srce}).
The required result follows from (\ref{p111}) and (\ref{p112}) and the fact that $v$ is a filtered field
.\qed
\end{proof}

We can always introduce a filtered scalar function $p \in \m{R}(M,\mb{R})$,
\be
\m{L} p = 0 \qquad (x,t,\eta) \in M \,,
\ee
and write
\be
a = - \n p + r \,,
\label{defna}
\ee
for some $r \in \m{R}(M,\mb{R}^n)$.
Noting that $\m{L} a = - \m{L} \n p + \m{L} r = - \n \m{L} p + \m{L} r = \m{L} r$, we have constructed from (\ref{ff}) and (\ref{df}) the system of macroscopic equations:
\be
\n \cdot v = 0 \qquad (x,t,\eta) \in M \,,
\label{mac01}
\ee
\be
\f{\p v}{\p t} + v \cdot \n v + \n p = r \qquad (x,t,\eta) \in M \,,
\label{mac02}
\ee
\be
\m{L} r = s \qquad (x,t,\eta) \in M \,,
\label{mac03}
\ee
where $s$ is given by (\ref{srce}).

We can think of the residual $r$ as incorporating the effects of the large scale viscous and stress/strains of the fluid that are manifestations of the filtering process.
In the case that we can assume the existence of a limiting pointwise representation of the filtered variables we can impose, under some appropriate norm $\| \cdot \|$, the condition that $\lim_{\eta \rightarrow 0^+} \|r - \nu \triangle v\| = 0$, for some constant $\nu$.
In this way we recover, at least in the generalised sense, the Navier-Stokes equation from the macroscopic equations (\ref{mac01})-(\ref{mac02}) as a limiting formulation.
Since the notion of viscosity is itself meaningful only in some locally averaged sense, under the continuum fluid model the inviscid limit, $\lim_{\eta \rightarrow 0^+} \|r\| = 0$, is more natural.

\section{Macroscopic Equations}

We now attempt to generalise avoiding application specific issues.
The system of nonlinear PDEs considered will be of first order but it should be apparent that the analysis can incorporate higher order systems.
As in the previous section, let $x=(x^1, ... , x^n)$ be the cartesian coordinates of $\mb{R}^n$ ($n=1,2,3$), representing the space domain, and denote the time coordinate by $t \in \mb{R}_+$ ($\mb{R}_+ =$ the set of positive real numbers excluding $0$).
Let $M = \mb{R}^n \times \mb{R}_+ \times I \subset \mb{R}^m$, $m={n+2}$, where $I=(0,\eta_0)$ is the scale interval, $\eta \in I$ is the scale parameter and $0 < \eta_0 << 1$.
We shall write $\xi=(x,t,\eta)=(\xi^1, \dots , \xi^m)$, where we have set $\xi^a = x^a$ ($1 \leq a \leq n$), $\xi^{n+1}=t$ and $\xi^m=\xi^{n+2}=\eta$ is the scale parameter.

For any $D \subseteq \mb{R}^{N_1}$, $N_1 > 0$, we denote by $\m{R}(D,\mb{R}^{N_2})$, $N_2 >0$, the set of smooth ($C^\infty$) maps $u: D \rightarrow \mb{R}^{N_2}$.
For $u=(u^\alpha)=(u^1,\dots,u^N) \in \m{R}(M,\mb{R}^N)$ we shall denote $u_i=\p_i u =(u_i^\alpha)$, $u_{ij}=\p_i \p_j u = (u_{ij}^\alpha)$, $\dots$, with the shorthand notation $\p_i = \p \,/\p \xi^i$.
Throughout, unless otherwise stated, we use the range $1 \leq i,j,k \leq m$ for lower case Latin indices and the range $1 \leq \alpha, \beta ,\gamma \leq N$ for the lower case Greek indices.
For easier identification of certain important operators we use the indices $t$ instead of $n+1$ and $\eta$ instead of $m=n+2$, e.g. we write $\p_t$ instead of $\p_{n+1}$ and $\p_\eta$ instead of $\p_{n+2} = \p_m$.

We shall express our macroscopic equations in terms of a core function of $u$ and $u_i$ that is form invariant with respect to scale and a residual term.
Let $F \in \mb{R}^N$ such that $F(u,u_i)$ is a smooth vector function with respect to its indicated arguments.
Given any $u \in \m{R}(M,\mb{R}^N)$ there exists an $r \in \m{R}(M,\mb{R}^N)$ such that
\be
F (u,u_i) = r \qquad \xi \in M \,.
\label{me2}
\ee
Alternatively, we may regard (\ref{me2}) as a system of $N$ first order PDEs for the unknown dependent vector function $u(\xi)$ if $r(\xi)$ is known.
We shall henceforth refer to $r$ as the residual and $F(u,u_i)$ as the core function.
The core function may be regarded as retaining the structure of the $N$ PDEs that are associated with the fully resolved system as shall be outlined below.
The construction of the core function in a wider context is application specific and follows along the lines discussed in the previous section.
We shall return to this application in a later section.
Our first task is to find constraints on $r(\xi)$ such that the solutions of (\ref{me2}) have well defined filter properties.

We introduce the vector field operators $V_i$ defined by
\be
V_i = \p_i + u_i^\alpha \dot{\p}_\alpha + u_{ij}^\alpha \dot{\p}_\alpha^j + u_{ijk}^\alpha \dot{\p}_\alpha^{jk} + \dots
\label{me4}
\ee
with the notation
\be
\p_i = \f{\p}{\p \xi^i} , \quad
\dot{\p}_\alpha = \f{\p~}{\p u^\alpha} \,, \quad
\dot{\p}_\alpha^i = \f{\p~}{\p u_i^\alpha} \,, \quad
\dot{\p}_\alpha^{ij} = \f{\p~}{\p u_{ij}^\alpha} \,, \quad \dots
\label{me3}
\ee
Here and throughout we use the summation convention of repeated upper and lower indices over the assigned ranges for the given indice type given above.
For $u \in \m{R}(M,\mb{R}^N)$ the operator $V_i$ acting on any $F \in \mb{R}^N$ using the more general representation $F(\xi,u,u_i, u_{ij}, \dots )$ generalises the partial derivative operator $\p_i$ acting on members of $\m{R}(M,\mb{R}^N)$.
($V_i F$ is sometimes referred to as the total derivative of $F$ with respect to $\xi^i$). 
The action of $V_i$ on any $w \in \m{R}(M,\mb{R}^N)$, $w=w(\xi)$, gives the partial derivative of $w$ with respect $\xi^i$, i.e. $V_i w = \p_i w$.
The vector field operators $V_i$ may be truncated up to terms associated with the largest order of the partial derivatives of $u$ present in the expressions on which $V_i$ is operating.
In truncated form the vector fields $V_i$ have special geometric significance in the general theory of PDEs \cite{Ede90},\cite{Ede92}.

We introduce the spatial Laplacian operator
\be
\triangle = \n \cdot \n \, ,
\label{me8}
\ee
where $\n = (\p_1 , \cdots , \p_n)$ is the spatial gradient operator. 
The associated operator acting on vector functions $F \in \mb{R}^N$ using the general representation $F(\xi,u,u_i, u_{ij}, \dots )$ is defined by
\be
L = \sum_{b=1}^n V_b V_b \,.
\label{me9}
\ee
The set of filter maps is defined as follows:

\begin{definition} $\m{P}(M,\mb{R}^N)$ is the set of vector functions $u \in \m{R}(M,\mb{R}^N)$ such that
\be
(\p_\eta - \triangle) u = 0 \qquad \xi \in M \,.
\label{me10}
\ee
\end{definition}

We can assume that $\m{P}(M,\mb{R}^N)$ is sufficiently populated that it is of interest to the analysis that follows.
Suppose for the moment that we can assume that there exists $\wt{u} \in \m{R}(\mb{R}^n \times \mb{R}_+,\mb{R}^N)$ satisfying the first order PDEs
\be
F (\wt{u},\p_i \wt{u}) =0 \qquad (x,t) \in \mb{R}^n \times \mb{R}_+
\label{me11}
\ee
for some $F \in \mb{R}^N$ such that $F(u,u_i)$ is smooth with respect to its indicated arguments.
If for some $u \in \m{P}(M,K)$ we have $u |_{\eta=0^+} = \wt{u}$ and $\lim_{|x| \rightarrow \infty} \n u = 0$ then $u (x,t,\eta) = \int_{\mb{R}^n} G(x-\hat{x},\eta) \wt{u} (\hat{x},t) \, d \hat{x}^1 \dots d \hat{x}^n$, where $G(x,\eta) = (4 \pi \eta)^{-n/2} \exp [- |x|^2 / (4 \eta )]$.
Hence $u(\xi)= u (x,t,\eta)$ is the Gaussian spatial filter of $\wt{u} (x,t)$.

The PDEs (\ref{me11}) represent the fully resolved system.
It should be mentioned at this point that in many applications of interest $F$ will not explicitly depend on terms $u_\eta$ (see application below).
Under our assigned range for the lower case Latin indices the representation $F(u,u_i)$ will indicate a possible explicit dependence on $u_\eta$.
The more general representation $F(u,u_i)$ will not effect the calculations below and is used to avoid introducing more notation for the indices and their ranges.

Since for $u \in \m{P}(M,\mb{R}^N)$ we cannot expect that in general $F(u,u_i)$ will vanish identically on $M$  we must introduce a residual term $r \in \m{R}(M,\mb{R}^N)$ as in (\ref{me2}).
The system (\ref{me11}) is recovered from (\ref{me2}) by requiring that $\lim_{\eta \rightarrow 0^+} r = 0$.

In many applications one cannot gaurantee the existence of regular solutions $\wt{u} \in \m{R}(\mb{R}^n \times \mb{R}_+,\mb{R}^N)$ for the fully resolved system and it is at this point that we depart from the assumptions made in \cite{Pan04},\cite{Pan05}.
Our focus is on nonlinear problems that generate solutions that exhibit fluctuations at all space and time scales. 
We propose that such systems can be defined in a meaningful way enirely within space-time-scale avoiding any reference to a fully resolved system.

We shall make extensive use of the vector field operator
\be
W = \p_\eta + (\triangle u^\alpha ) \dot{\p}_\alpha + (\triangle u_i^\alpha) \dot{\p}_\alpha^i + (\triangle u_{ij}^\alpha) \dot{\p}_\alpha^{ij} + \dots
\label{me13}
\ee
We are now in a position to establish the constraints on the residual for filter maps. 

\begin{proposition}
If $u \in \m{P} (M,\mb{R}^N)$ and $r \in \m{R} (M,\mb{R}^N)$ such that $F(u,u_i) - r =0$, $\xi \in M$, where $F \in \mb{R}^N$ and $F(u,u_i)$ is smooth with respect to its indicated arguments, then $(\p_\eta - \triangle) r = s$, $\xi \in M$, where $s = (W - L ) F(u,u_i)$.
\label{thme1}
\end{proposition}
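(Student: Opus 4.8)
The plan is to read the identity $F(u,u_i)=r$ as \emph{defining} the residual as the element of $\m{R}(M,\mb{R}^N)$ given by $\xi \mapsto F(u(\xi),u_i(\xi))$, and then to compute $(\p_\eta - \triangle) r$ directly by the chain rule. The only conceptual point to keep straight throughout is the distinction between $F$ as a smooth function of the jet variables $(u,u_i)$ and the function on $M$ obtained after substituting a particular filter map $u \in \m{P}(M,\mb{R}^N)$; the operators $V_i$, $L$, $W$ are precisely the bookkeeping devices that convert $\p_i$ acting on the composite into derivations acting on $F$ in the jet variables. Since $F$ depends only on $u$ and $u_i$ (and carries no explicit $\xi$-dependence), the series (\ref{me4}) and (\ref{me13}) will truncate after their $u_i$-terms, which simplifies everything.

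First I would treat the spatial part. For each spatial index $b$ the chain rule gives $\p_b [F(u(\xi),u_i(\xi))] = u_b^\alpha \dot{\p}_\alpha F + u_{bj}^\alpha \dot{\p}_\alpha^j F$ evaluated along $u(\xi)$, and since the $\p_b$ in $V_b$ annihilates the $\xi$-independent $F$, this is exactly $(V_b F)(u(\xi),\dots)$. Iterating once more gives $\p_b \p_b [F(u(\xi),\dots)] = (V_b V_b F)(u(\xi),\dots)$, and summing over $b=1,\dots,n$ yields $\triangle r = (L F)(u(\xi),\dots)$ by the definition (\ref{me9}). Next the $\eta$-part: the chain rule gives $\p_\eta r = (\p_\eta u^\alpha)\dot{\p}_\alpha F + (\p_\eta u_i^\alpha)\dot{\p}_\alpha^i F$. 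Here I invoke $u \in \m{P}(M,\mb{R}^N)$: the filter equation (\ref{me10}) gives $\p_\eta u^\alpha = \triangle u^\alpha$, and differentiating (\ref{me10}) with respect to $\xi^i$ and commuting the (smooth) mixed partials gives $\p_\eta u_i^\alpha = \triangle u_i^\alpha$. Substituting, $\p_\eta r = (\triangle u^\alpha)\dot{\p}_\alpha F + (\triangle u_i^\alpha)\dot{\p}_\alpha^i F = (W F)(u(\xi),\dots)$, the $\p_\eta$-term of $W$ contributing nothing to the $\xi$-independent $F$.

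Combining the two computations gives $(\p_\eta - \triangle) r = W F - L F = (W - L) F(u,u_i) = s$, which is the asserted identity. There is no analytic difficulty in any of this; the closest thing to an obstacle is purely clerical: one must justify the truncation of $V_i$, $L$, $W$ against the first-order function $F$, and, more importantly, be scrupulous about performing the substitution $u = u(\xi)$ consistently — differentiating the composite and then re-expressing via jet-variable operators, rather than mixing the two viewpoints — so that the equalities $\triangle r = (LF)(u(\xi),\dots)$ and $\p_\eta r = (WF)(u(\xi),\dots)$ are literally correct and not merely formal.
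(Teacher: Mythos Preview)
Your argument is correct and is essentially the paper's own proof. The paper phrases it as applying $(V_\eta - L)$ to the identity $F(u,u_i)-r=0$ and then decomposing $V_\eta - L = (V_\eta - W) + (W-L)$, noting that $V_\eta = W$ whenever $u \in \m{P}(M,\mb{R}^N)$; your separate chain-rule computations of $\p_\eta r = WF$ and $\triangle r = LF$ amount to exactly the same observation, just organized componentwise rather than via the operator decomposition.
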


\begin{proof}
For any $u \in \m{R} (M,\mb{R}^N)$ there will always exist an $r \in \m{R} (M,\mb{R}^N)$ such that $F(u,u_i)-r$ vanishes identically on $M$.
We have
\be
(V_\eta - L) (F(u,u_i) - r)=0 \,.
\label{me16}
\ee
Manipulating the left hand side we obtain
\be
\bal
(V_\eta - L) (F(u,u_i) - r) & = (V_\eta - L) F(u,u_i) - (\p_\eta - \triangle) r \\
 & = (V_\eta - W) F(u,u_i) + (W - L) F(u,u_i) - (\p_\eta - \triangle) r \\
 & = (V_\eta - W) F(u,u_i) + s - (\p_\eta - \triangle) r \,,
\eal
\label{me16a}
\ee
where we take $r \in \m{R} (M,\mb{R}^N)$ to mean that it has the representation $r(\xi)$ so that $V_i r = \p_i r$.
It is straightforwad to see that if $u \in \m{P} (M,\mb{R}^N)$ then $V_\eta=W$ and hence the first term in this last expression of (\ref{me16a}) vanishes.
Thus
\be
(V_\eta - L) (F(u,u_i) - r)= -(\p_\eta - \triangle) r + s \,.
\label{me16b}
\ee
The desired result follows from (\ref{me16}) and (\ref{me16b})
.\qed
\end{proof}

We can now write the full system of macroscopic equations for the filtered variables $u \in \m{P}(M,\mb{R}^N)$ as
\be
F(u,u_i) = r \qquad \xi \in M \,,
\label{me18}
\ee
\be
(\p_\eta - \triangle) r = s \qquad \xi \in M \,,
\label{me19}
\ee
where
\be
s = (W-L)F(u,u_i) \,.
\label{me20}
\ee

Given any core function $F \in \mb{R}^N$ such that $F(u,u_i)$ is smooth with respect to its arguments, for any $u \in \m{P}(M,\mb{R}^N)$ there will always exist an $r \in \m{R}(M,\mb{R}^N)$ in (\ref{me18}) satisfying (\ref{me19})-(\ref{me20}).
From the practical viewpoint, we must consider as a separate issue the task of developing algorithms that extract from $\m{P}(M,\mb{R}^N)$ those members that can be regarded as meaningful, in some formal sense, to the specific application under consideration.   

We now establish an important link between the filter error, $\psi$, and the residual equation error, $e$, as defined in the following.

\begin{proposition}
Let $u,\,r \in \m{R} (M,\mb{R}^N)$ such that $F(u,u_i)-r=0$, $\xi \in M$, where $F \in \mb{R}^N$ and $F(u,u_i)$ is smooth with respect to its indicated arguments.
Let $\psi\in \m{R} (M,\mb{R}^N)$ be defined by
\be
\psi (\xi) = (\p_\eta - \triangle)u (\xi) \,.
\label{me29}
\ee
It follows that $\psi$ satisfies the linear system of first order PDEs
\be
C_\beta^{\alpha i} \p_i \psi^\beta +
C_\beta^\alpha \psi^\beta = e^\alpha \qquad \xi \in M \,,
\label{me30}
\ee
where the coefficients are given by $C_\beta^{\alpha i} = \dot{\p}_\beta^{i} F^\alpha (u,u_j)$, $C_\beta^\alpha = \dot{\p}_\beta F^\alpha (u,u_i)$, and
\be
e = (\p_\eta - \triangle )r - s \,,
\label{me30a}
\ee
where $s = (W-L)F(u,u_i)$.
\label{thme2}
\end{proposition}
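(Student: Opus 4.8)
The plan is to re-run the computation from the proof of Proposition \ref{thme1} \emph{without} imposing $u \in \m{P}(M,\mb{R}^N)$, so that the operator term $(V_\eta - W)F(u,u_i)$, which vanished there, is now retained and identified with a first order linear operator acting on $\psi$. To begin, since $F(u,u_i)-r$ vanishes identically on $M$ and $V_\eta - L$ is composed of total derivative operators, $(V_\eta - L)(F(u,u_i)-r)=0$. Expanding the left-hand side exactly as in (\ref{me16a})--(\ref{me16b}) --- using that $r$ has the representation $r(\xi)$ so that $V_i r = \p_i r$, whence $(V_\eta - L)r = (\p_\eta - \triangle)r$, and splitting $(V_\eta - L)F(u,u_i) = (V_\eta - W)F(u,u_i) + (W-L)F(u,u_i) = (V_\eta - W)F(u,u_i) + s$ --- one obtains
\be
(V_\eta - W)F(u,u_i) = (\p_\eta - \triangle)r - s = e \,.
\label{pp1}
\ee
This is identity (\ref{me16b}) with the first term kept; in Proposition \ref{thme1} it dropped out because $V_\eta = W$ for filter maps.

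Next I would evaluate the left-hand side of (\ref{pp1}) directly. Subtracting the series (\ref{me13}) for $W$ from the series (\ref{me4}) taken with $i=\eta$, term by term, gives
\be
V_\eta - W = (u_\eta^\alpha - \triangle u^\alpha)\dot{\p}_\alpha + (u_{\eta j}^\alpha - \triangle u_j^\alpha)\dot{\p}_\alpha^j + (u_{\eta jk}^\alpha - \triangle u_{jk}^\alpha)\dot{\p}_\alpha^{jk} + \cdots \,.
\ee
For smooth $u$ the mixed partials commute and $\triangle$ commutes with each $\p_j$, so the coefficient of $\dot{\p}_\alpha$ is $\psi^\alpha = (\p_\eta - \triangle)u^\alpha$, the coefficient of $\dot{\p}_\alpha^j$ is $\p_j \psi^\alpha$, and the coefficient of a $k$-th order derivative operator is $\p_{j_1}\cdots\p_{j_k}\psi^\alpha$. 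Because $F(u,u_i)$ depends only on $u$ and its first derivatives $u_i$, the derivatives $\dot{\p}_\alpha^{jk}F,\ \dot{\p}_\alpha^{jkl}F,\dots$ all vanish, so only the first two terms of $V_\eta - W$ act nontrivially and
\be
[(V_\eta - W)F(u,u_i)]^\alpha = \dot{\p}_\beta F^\alpha(u,u_i)\,\psi^\beta + \dot{\p}_\beta^i F^\alpha(u,u_j)\,\p_i\psi^\beta \,.
\ee
Recognising the two coefficients as $C_\beta^\alpha$ and $C_\beta^{\alpha i}$ and combining with (\ref{pp1}) rearranges to (\ref{me30}) with $e$ as in (\ref{me30a}), which completes the argument.

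The steps are essentially mechanical; the only points needing a word of justification are the term-by-term subtraction of the two operator expansions, the truncation of those expansions at first order when they act on $F(u,u_i)$ (legitimate precisely because $F$ carries no dependence on second- or higher-order derivatives of $u$), and the commutation of $\p_j$ with $\p_\eta$ and with $\triangle$ --- all immediate for $C^\infty$ maps. I do not anticipate a genuine obstacle: the real content is simply the observation that $V_\eta - W$, restricted to functions of $(u,u_i)$, is the first order differential operator $\psi^\beta\dot{\p}_\beta + (\p_i\psi^\beta)\dot{\p}_\beta^i$, whose action on $F$ produces exactly the linear combination appearing in (\ref{me30}); setting $\psi=0$ recovers Proposition \ref{thme1} as the special case $e=0$.
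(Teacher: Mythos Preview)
Your argument is correct and follows the paper's own proof essentially line for line: the paper also applies $V_\eta - L$ to $F(u,u_i)-r$, splits off $(V_\eta - W)F(u,u_i)$, writes $V_\eta - W = \psi^\alpha\dot{\p}_\alpha + (\p_i\psi^\alpha)\dot{\p}_\alpha^i + \cdots$, and then reads off (\ref{me30}). Your added remarks on the truncation at first order and on commutation of $\p_j$ with $\p_\eta$ and $\triangle$ simply make explicit what the paper leaves implicit.
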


\begin{proof}
We follow a similar argument to that of the proof of the previous proposition but note that now $r$ need not satisfy (\ref{me19})-(\ref{me20}) and hence $u$ is not necessarily a member of $\m{P}(M,\mb{R}^N)$.
Since $F(u,u_i)-r$ vanishes identically on $M$ we have
\be
(V_\eta - L) (F(u,u_i) - r)=0 \,.
\label{me31}
\ee
The left hand side becomes
\be
\bal
(V_\eta - L) (F(u,u_i) - r) & = (V_\eta - L) F(u,u_i) - (\p_\eta - \triangle) r \\
 & = (V_\eta - W) F(u,u_i) + (W - L) F(u,u_i) - (\p_\eta - \triangle) r \\
 & = (V_\eta - W) F(u,u_i) + s - (\p_\eta - \triangle) r \\
 & = (V_\eta - W) F(u,u_i) - e \,,
\eal
\label{me32}
\ee
where we take $r \in \m{R} (M,\mb{R}^N)$ to mean that it has the representation $r(\xi)$ so that $V_i r = \p_i r$.
The first term in the last expression will not in general vanish because $u$ is not necessarily a member of $\m{P}(M,\mb{R}^N)$.
Using the definitions of the vector field operators $V_\eta$ and $W$, we have the explicit representation
\be
V_\eta - W  = \psi^\alpha \dot{\p}_\alpha + (\p_i \psi^\alpha) \dot{\p}_\alpha^i + (\p_i \p_j \psi^\alpha) \dot{\p}_\alpha^{ij} + \dots
\label{me33}
\ee
The identity (\ref{me30}) then follows from (\ref{me31})-(\ref{me33})
.\qed
\end{proof}

If the residual $r$ satisfies (\ref{me19}), where the source term is given by (\ref{me20}), then $e$ vanishes identically on $M$ and (\ref{me30}) admits the trivial solution from which it follows that $u \in \m{P}(M,\mb{R}^N)$.

The system of PDEs (\ref{me18})-(\ref{me19}) are the exact equations satisfied by the filtered variables $u \in \m{P}(M,\mb{R}^N)$ on space-time-scale $M$.
They are of little use in applications since the presence of the $\p_\eta$ term in the residual equation (\ref{me19}) means that the solution of the system can only be obtained by scale integration from some arbitrary scale slice of $M$ on which $u$ is prescribed.
To overcome this problem we resort to approximation by replacing the residual equation (\ref{me19}) with a constraint on the residual that allows us to generate regular solutions $u$ independently on scale slices $M|_{\eta = \text{const}}$ such that $u$ is a good approximation of some member of $\m{P}(M,\mb{R}^N)$ on that scale slice.

\section{Approximation}

An example of a residual approximation that has been found to be useful in applications is one in which the exact residual equation (\ref{me19}) is replaced by the constraint (see \cite{Pan04},\cite{Pan05})  
\be
\triangle r - \f{r}{\eta} + s =0 \,,
\label{ra}
\ee
where the source term $s$ is given by (\ref{me20}).
The approximate macroscopic system (\ref{me18}) and (\ref{ra}) can be solved independently on any scale slice $M|_{\eta=\text{const}}$ by updating the residual via (\ref{ra}) at each timestep as soon as $u$ is available.
If we can assume that $\lim_{\eta \rightarrow 0^+} | r | =0$ it follows that
\be
|e|= \left |\f{\p r}{\p \eta} - \f{r}{\eta} \right | \leq \f{\eta}{2} \sup_{M} \left | \f{\p^2 r}{\p \eta^2} \right |  \,.
\label{rarre}
\ee
Motivated by a discussion of residual approximations in general, we shall avoid any focus on specific formulations such as (\ref{ra}).

In practical application space-time solutions are generated from the system of PDEs (\ref{me18}) on some scale slice $M|_{\eta = \text{const}}$ while using a residual that is not exact, i.e. one in which $e$ does not vanish identically on that scale slice.
If we enforce (\ref{me18}) with a residual that is not exact then it follows from Proposition \ref{thme1} that $u$ cannot be a member of $\m{P}(M,\mb{R}^N)$.

Our objective here is to investigate how well members of $\m{R}(M,\mb{R}^N)$ serve as approximations to members of $\m{P}(M,\mb{R}^N)$. 
More precisely, suppose that we employ a residual that is based on an approximation of the residual equation (\ref{me19}) for which we know that a regular solution $u \in \m{R}(M,\mb{R}^N)$ for (\ref{me18}) exists.
We are interested in how $u$ will deviate from some member of $\m{P}(M,\mb{R}^N)$ as the scale parameter is increased if the two solutions agree on some arbitrary scale slice.

\begin{proposition}
For some $\varepsilon \in (0,\eta_0)$, let $M_\varepsilon = M|_{\eta \in (\varepsilon,\eta_0)}$.
Let $u \in \m{R} (M_\varepsilon,\mb{R}^N)$ and $\bar{u} \in \m{P}(M,\mb{R}^N)$ such that $\lim_{\eta \rightarrow \varepsilon^+} (u - \bar{u})=0$.
Suppose further that $\psi \in \m{R}(M_\varepsilon,\mb{R}^N)$, defined by $\psi (\xi) = (\p_\eta - \triangle)u(\xi)$, is bounded  on $M_\varepsilon$ and that $\lim_{|x| \rightarrow \infty} \n \bar{u} , \, \n u = 0$.
It follows that
\be
(u - \bar{u})(x,t,\eta) =  \int_\varepsilon^\eta \int_{\mb{R}^n} G(x- \hat{x},\eta - \hat{\eta}) \psi (\hat{x},t, \hat{\eta}) \, d \hat{x}^1 \dots d \hat{x}^n \, d \hat{\eta} \qquad (x,t,\eta) \in M_\varepsilon \,,
\label{a16}
\ee
where $G(x,\eta) = (4 \pi \eta)^{-n/2} \exp [- |x|^2 / (4 \eta )]$.
\label{thme3}
\end{proposition}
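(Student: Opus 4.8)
The plan is to recognise (\ref{a16}) as the classical Duhamel representation for the inhomogeneous heat equation in the variables $(x,\eta)$, with $t$ an inert parameter and the operator $\p_\eta-\triangle$ acting componentwise on the $N$ entries. Set $w=u-\bar u$ on $M_\varepsilon$. Since $\bar u\in\m{P}(M,\mb{R}^N)$ we have $(\p_\eta-\triangle)\bar u=0$, hence $(\p_\eta-\triangle)w=(\p_\eta-\triangle)u=\psi$ on $M_\varepsilon$, while the hypothesis $\lim_{\eta\to\varepsilon^+}(u-\bar u)=0$ says $w$ carries zero data on the slice $\eta=\varepsilon$. Thus, for each fixed $t$, $w(\cdot,t,\cdot)$ solves $(\p_\eta-\triangle)w=\psi$ on $\mb{R}^n\times(\varepsilon,\eta_0)$ with vanishing initial data as $\eta\to\varepsilon^+$, and (\ref{a16}) is exactly the assertion that $w$ coincides with the Duhamel integral of $\psi$ against the kernel $G$.

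First I would check that the right-hand side of (\ref{a16}), call it $v$, genuinely solves this problem. Because $\psi$ is bounded on $M_\varepsilon$ and $\int_{\mb{R}^n}G(\hat x,\tau)\,d\hat x^1\dots d\hat x^n=1$ for every $\tau>0$, the integral converges absolutely with $|v|\le(\eta-\varepsilon)\sup_{M_\varepsilon}|\psi|$, so in particular $v\to0$ as $\eta\to\varepsilon^+$. Differentiating under the integral sign and using $(\p_\eta-\triangle)G(x-\hat x,\eta-\hat\eta)=0$ for $\hat\eta<\eta$ together with the approximate-identity property $\int_{\mb{R}^n}G(x-\hat x,\tau)\psi(\hat x,t,\eta-\tau)\,d\hat x^1\dots d\hat x^n\to\psi(x,t,\eta)$ as $\tau\to0^+$ yields $(\p_\eta-\triangle)v=\psi$ on $M_\varepsilon$, and smoothness of $v$ follows from the same estimates; one also records that $\n v$ is bounded on $\eta$-compact subsets, e.g.\ $|\n v|\le c\,\sqrt{\eta-\varepsilon}\,\sup_{M_\varepsilon}|\psi|$ from the scaling $\int_{\mb{R}^n}|\n G(\hat x,\tau)|\,d\hat x^1\dots d\hat x^n=c\,\tau^{-1/2}$.

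It remains to identify $w$ with $v$. The difference $h=w-v$ solves $(\p_\eta-\triangle)h=0$ on $M_\varepsilon$ with $h\to0$ as $\eta\to\varepsilon^+$; by hypothesis $\n u,\n\bar u\to0$ as $|x|\to\infty$ and $\n v$ is bounded, so $\n h$ is bounded and $h$ is sublinear in $|x|$, whence uniqueness of heat solutions of such growth forces $h\equiv0$. Alternatively, and more self-containedly, one can obtain (\ref{a16}) in one step by applying Green's second identity to $w(\hat x,t,\hat\eta)$ and $K(\hat x,\hat\eta)=G(x-\hat x,\eta-\hat\eta)$ over the cylinder $\{|\hat x|<R\}\times(\varepsilon,\eta-\delta)$: since $K$ satisfies the backward heat equation $\p_{\hat\eta}K+\triangle_{\hat x}K=0$, the volume terms carrying $w$ cancel and $\int K\psi$ reduces to the three boundary contributions; the slice term at $\hat\eta=\varepsilon$ drops out by the initial-data hypothesis; the lateral term on $|\hat x|=R$ vanishes as $R\to\infty$ because $K$ and $\n_{\hat x}K$ decay like Gaussians while $w$ is sublinear and $\n_{\hat x}w\to0$; and the slice term at $\hat\eta=\eta-\delta$ tends to $w(x,t,\eta)$ as $\delta\to0^+$ since $G(x-\cdot,\delta)$ is an approximate identity, while the left-hand side $\int K\psi$ tends to the claimed integral by dominated convergence. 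I expect the main obstacle to be precisely this limiting analysis: controlling the lateral boundary contribution at spatial infinity, where the decay hypotheses on $\n u$ and $\n\bar u$ enter essentially, together with the recovery of $w(x,t,\eta)$ from the approximate identity as $\delta\to0^+$ and the handling of the initial layer $\eta\to\varepsilon^+$.
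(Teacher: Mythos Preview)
Your approach is correct and essentially identical to the paper's: set $\omega=u-\bar u$, observe that $(\p_\eta-\triangle)\omega=\psi$ on $M_\varepsilon$ with $\omega|_{\eta=\varepsilon^+}=0$ and $\lim_{|x|\to\infty}\n\omega=0$, and then invoke the Duhamel representation for the inhomogeneous heat equation. The paper's proof stops at precisely that point, simply asserting that the solution of this initial-value problem is unique and given by (\ref{a16}); the verification that the Duhamel integral solves the problem and the uniqueness argument you supply are details the paper leaves to the reader.
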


\begin{proof}
Setting $\omega = u - \bar{u}$ we have
\be
(\p_\eta - \triangle) \omega = \psi \qquad \xi \in M_\varepsilon \,.
\label{a19}
\ee
We also have 
\be
\omega|_{\eta = \varepsilon^+} = 0 ~; \qquad \lim_{|x| \rightarrow \infty} \n \omega = 0 \,.
\label{a20}
\ee
The solution of (\ref{a19})-(\ref{a20}) is unique and is given by (\ref{a16}) 
.\qed
\end{proof}

Given that $\int_{\mb{R}^n} G(x,\eta) \, d x^1 \dots d x^n = 1$ we obtain from (\ref{a16}) the bound, for $\xi \in M_\varepsilon$,
\be
|u - \bar{u}| \leq \eta \sup_{\xi \in M_\varepsilon} | \psi | \,.
\label{a30}
\ee

\section{Application}

We return to the example of an ideal fluid.
We set $u=(v,p)$, where $v = (v^1,\dots,v^n)$ is the filtered fluid velocity vector and $u^{n+1}=p$  is the filtered fluid pressure.
In this case $N=n+1$ and we set $F=(F_{(v)},F_{(p)})$, where $F_{(v)} = (F^1, \dots , F^n)$ and $F_{(p)}=F^{n+1}$.
Motivated by Section 1 we define the core function by
\be
F_{(v)} = v_t + v \cdot \n v + \n p \, , \quad F_{(p)} = \n \cdot v \, .
\label{ap1}
\ee
Note that the core function is not explicitly dependent on $\xi$ and $u_\eta$.

Since we insist that admissible solutions have well defined filter properties, i.e. $u \in \m{P}(M,\mb{R}^N)$, Proposition \ref{thme1} demands that the residual satisfy (\ref{me19}).
Here the source term $s$ is obtained by applying the identity (\ref{me20}) to the explicit representation of the core function given in (\ref{ap1}):
\be
s^a = - 2 \sum_{b=1}^n \sum_{c=1}^n v_c^b v_{bc}^a \, , \quad (1 \leq a \leq n) \, , \qquad \quad s^{n+1} =0  \, ,
\label{ap2}
\ee
using the notation $u_i=(v_i,p_i)$, $u_{ij}=(v_{ij},p_{ij})$.
We set $r=(r_{(v)},r_{(p)})$, where $r_{(v)} = (r^1, \cdots ,r^n)$ and $r_{(p)}=r^{n+1}$, and similarly $s=(s_{(v)},s_{(p)})$, where $s_{(v)} = (s^1, \cdots ,s^n)$ and $s_{(p)}=s^{n+1}$.
If we can assume that $r_{(p)} \rightarrow 0$ as $\eta \rightarrow 0^+$ then, in light of the residual equation (\ref{me19}) and the fact that the component of the source term $s_{(p)}=s^{n+1}$ vanishes identically on $M$, $r_{(p)}$ also vanishes identically on $M$.
It follows that the continuity equation $\n \cdot v =0$ is form invariant with repsect to scale.
We can now write
\be
s_{(v)} = - 2 \, \n \cdot \sigma \, , \qquad s_{(p)} =0 \, ,
\label{ap3}
\ee
where $\sigma$ is a symmetric second order tensor whose components are given by (\ref{sig}).
Thus (\ref{ap3}) is in agreement with the results presented in the first section.
We note that the source term in the residual equation also does not contain any explicit dependence on terms involving $\p_\eta$ and $\p_t$.
 
For $u \in \m{P}(M,\mb{R}^N)$ the macroscopic equations (\ref{me18}) then take the explicit form
\be
\f{\p v}{\p t} + v \cdot \n v + \n p = r_{(v)} \, ,
\quad \n \cdot v = 0 \, .
\label{ap4}
\ee
In practice we wish to solve (\ref{ap4}) (subject to suitable initial conditions) independently on scale slices $M|_{\eta=\text{const}}$.
Although the source term (\ref{ap3}) is suitably defined, the residual equation (\ref{me19}) is still not suitable for this purpose due to the presence of the $\p_\eta$ term on the left hand side.
For this reason we resort to an approximation of the residual based on (\ref{me19})-(\ref{me20}) with the aim of obtaining a member of $\m{R}(M,\mb{R}^N)$ that approximates some member of $\m{P}(M,\mb{R}^N)$.

In a similarly fashion as above, let us set $e = (e_{(v)},e_{(p)})$ and $\psi=(\psi_{(v)},\psi_{(p)})$.
Because $r_{(p)}$ and $s_{(p)}$ vanish indentically on $M$ it follows that $e_{(p)}$ also vanishes identically on $M$.
Using the prescription of the core function (\ref{ap1}) to obtain the coefficients, the system of PDEs (\ref{me30}) reduce to
\be
\p_t \psi_{(v)} + v \cdot \n \psi_{(v)} + \psi_{(v)} \cdot \n v + \n \psi_{(p)} = e_{(v)} \, , \qquad \n \cdot \psi_{(v)} = 0 \, .
\label{ap6}
\ee
for some $v \in \m{R}(M,\mb{R}^n)$ that is a solution of (\ref{ap4}).
Clearly the long time behaviour of (\ref{ap6}) is of interest and will depend crucially on the specific residual approximation used.
If one can supply reliable estimates of the residual equation error $e$ on the scale slice on which the space-time solution is being sought then there is the opportunity of resorting to computation.
Since the left hand sides of (\ref{ap6}) contain no partial derivatives with respect to scale we may, from the computational view, obtain numerical estimates of $\psi$ by coupling (\ref{ap6}) with the system (\ref{ap4}) using the same algorithms.
In this way one may obtain estimates of the error bound (\ref{a30}) on the scale slice of interest as the computations proceed.

Before concluding it is important to note that for any $u \in \m{P}(M,\mb{R}^N)$ satisfying the continuity constraint there will always exist an $r_{(v)} \in \m{P}(M,\mb{R}^N)$ satisfying (\ref{me19}) with source (\ref{ap3}) such that (\ref{ap4}) will hold.
This does not entirely resolve the existence problem for applications since many solutions will have very little interest in this respect.
As already mentioned, the problem of extracting from this set those solutions that are of interest to the application under study is a separate issue and must be examined in the context of the prescribed initial conditions and, to some extent, the solution method.

\section{Concluding Remarks}

In avoiding reference to a fully resolved solution we do not insist that regular fully resolved solutions do not exist, only that we do not insist that our space-time-scale solutions uniformly converge to one in the limit as $\eta \rightarrow 0^+$.
In doing so we have the expectation that meaningful solutions to applications may be obtained entirely within space-time-scale. 

We have already seen that for any member of $\m{P}(M,\mb{R}^N)$ there will exist an $r \in \m{R}(M,\mb{R}^N)$ satisfying (\ref{me18}) and (\ref{me19}) where the source term $s$ is given by (\ref{me20}).
But we have yet to formally establish that $\m{P}(M,\mb{R}^N)$ is sufficiently populated that it contains members that have any useful meaning for specific applications.
Allied to this is the construction of algorithms that allow us to extract from $\m{P}(M,\mb{R}^N)$ such solutions.

Some progress has been made in this direction through numerical solution methods.
For nonlinear systems the introduction of discretisation dictates that one is seeking an approximation of the filtered representation of the dependent variables associated with the scale of resolution rather than an approximation of the fully resolved dependent variables.
In this context there arises the associated problem of how one defines the core function.
We can no longer simply associate the core function with the PDEs that are to be assumed to govern the fully resolved system.
In Section 1 an attempt was made to rederive the core function of an ideal fluid application without reliance on the well known governing equations assumed for the fully resolved system.
At this stage, the problem of defining the core function is very much application specific and needs to be investigated further in a more general context.

\end{document}